\pgfplotsset{compat=1.18}
\newcommand{\noi}{\noindent}
\newtheorem{theo}{{{Theorem}}}
\newtheorem{ex}{{ Example}}
\newtheorem{defi}{{Definition}}
\newcommand{\bA}{{\mathbf A}}
\newcommand{\bB}{{\mathbf B}}
\newcommand{\bC}{{\mathbf C}}
\newcommand{\bH}{{\mathbf H}}
\newcommand{\bI}{{\mathbf I}}
\newcommand{\bK}{{\mathbf K}}
\newcommand{\bL}{{\mathbf L}}
\newcommand{\bM}{{\mathbf M}}
\newcommand{\bP}{{\mathbf P}}
\newcommand{\bQ}{{\mathbf Q}}
\newcommand{\bS}{{\mathbf S}}
\newcommand{\bU}{{\mathbf U}}
\newcommand{\bV}{{\mathbf V}}
\newcommand{\bW}{{\mathbf W}}
\newcommand{\bY}{{\mathbf Y}}
\newcommand{\bZ}{{\mathbf Z}}
\newcommand{\system}{{ \mathbf{\Sigma}}}
\newcommand{\bx}{\mathbf{x}}
\newcommand{\by}{\mathbf{y}}
\newcommand{\be}{\mathbf{e}}
\newcommand{\bu}{\mathbf{u}}
\newcommand{\bh}{\mathbf{h}}
\newcommand{\IR}{{\mathbb R}}
\newcommand{\tP}{\widetilde{\bP}}
\newcommand{\tQ}{\widetilde{\bQ}}
\newcommand{\tU}{\widetilde{\mathbf{U}}}
\newcommand{\tL}{\widetilde{\mathbf{L}}}
\newcommand{\tM}{\widetilde{\mathbf{\mathbb{M}}}}
\newcommand{\tilL}{\widetilde{\mathbf{\mathbb{H}}}}
\newcommand{\tg}{\widetilde{\mathbb{{g}}}}
\newcommand{\tilh}{\widetilde{\mathbb{{h}}}}
\newcommand{\M}{\widetilde{\mathbb M}}
\newcommand{\K}{\widetilde{\mathbb K}}
\newcommand{\qbt}{\textsf{LQO-QBT}\xspace}
\title{Data-driven balanced truncation for linear systems with quadratic outputs}
\author{Reetish Padhi$^1$, Ion Victor Gosea$^{2}$, Igor Pontes Duff$^3$ and Serkan Gugercin$^4$
\thanks{Parts of this research were conducted while Padhi was visiting the CSC Group at the Max Planck Institute (MPI) in Magdeburg, Germany. Gugercin's work was supported in part by the
National Science Foundation (NSF), United States, under Grant No. CMMI-2130695.}
\thanks{$^1$R. Padhi and $^4$S. Gugercin is with the Department of Mathematics, Virginia Tech, Blacksburg, VA, USA (e-mail: {reetishp,gugercin}@vt.edu). }
\thanks{$^2$I. V. Gosea and $^3$I. Pontes Duff are with the CSC group, Max Planck Institute of Dynamics of Complex Technical Systems, Magdeburg, Germany (e-mail: {gosea,pontes}@mpi-magdeburg.mpg.de).}}
\begin{document}

\maketitle
\thispagestyle{empty} 
\pagestyle{empty} 

\begin{abstract}
We develop the framework for a non-intrusive, quadrature-based method for approximate balanced truncation (QuadBT) of linear systems with quadratic outputs, thus extending the applicability of QuadBT, which was originally designed for data-driven balanced truncation of standard linear systems with linear outputs only. The new approach makes use of the time-domain and frequency-domain quadrature-based representation of the system's infinite Gramians, only implicitly. We show that by sampling solely the extended impulse responses of the original system and their derivatives (or the corresponding transfer functions), we construct a reduced-order model that mimics the approximation quality of the intrusive (projection-based) balanced truncation. We validate the proposed framework
on a numerical example.
\end{abstract}

\section{Introduction}
\label{sec:introduction}

Linear systems with quadratic output (LQOs) are represented in state-space by
\begin{align} \label{eq:LQO}
   \system_{\textrm{LQO}}: \begin{cases} \dot\bx(t) = \bA\bx(t)+\bB\bu(t),\\
    \by(t) =\bC\bx(t)+\bx^\top(t)\bM\bx(t),
    \end{cases}
\end{align}
where $\bx(t)\in \IR^n$ is the state variable, $\bu(t)\in\IR$ is the control input, $\by(t)$ is the  output. For simplicity, we consider, for the moment,  the single-input single-output (SISO) case; extensions to multiple-input multiple-output (MIMO) systems will be discussed in Sec. \ref{sec:MIMO}. The system matrices are $\bA, \bM \in \IR^{n \times n}, \bB, \bC^T \in \IR^{n}$. We assume that the eigenvalues of $\bA$ are in the open left half-plane and thus 
$\system_{\textrm{LQO}}$ is asymptotically stable. The state equation in~\eqref{eq:LQO} is linear in $\bx(t)$, while the output $\by(t)$ has a quadratic form in $\bx(t)$. Without loss of generality, $\bM$ can be taken to be a symmetric matrix since replacing $\bx^\top\bM\bx$ with $\bx^\top\frac{(\bM+\bM^\top)}{2}\bx$ does not change the output of the system. Thus, in what follows, we assume that $\bM$ is a symmetric matrix. 

LQO systems have recently attracted significant attention as they arise in practical scenarios in which one is interested in observing quantities like the variance of a state from a reference level \cite{PulchNarayan19}. This is the case in problems concerning the observation of quantities like energy or power \cite{morDiaHGetal23}. Moreover, as shown in \cite{holicki2025energy}, adding the Hamiltonian (the energy stored in the system) as an additional observed output, an LTI port-Hamiltonian system becomes an LQO system.

For linear time-invariant (LTI) systems, that is, when $\bM= \mathbf{0}$ and $\by(t)= \bC \bx(t)$ in~\eqref{eq:LQO},
the field of model order reduction (MOR) is fairly established; we refer the reader to, e.g., \cite{ACA05,morAntBG20} for a comprehensive overview. {In this paper, we focus on MOR methods related to balanced truncation \cite{moore2003principal}. In particular, we develop a data-driven 
formulation of BT for LQO systems, thus extending the so-called quadrature-based BT method (QuadBT) of \cite{gosea2022data} to LQQ systems.}

To develop the QuadBT framework for LQO systems, we first use the time-domain representations of the Gramians and their quadrature-based formulations. The resulting approximate BT approach requires only the samples of the system's time-domain kernels and their derivatives, thus alleviating the need to explicitly access the state-space matrices. We also discuss how the framework can be applied using frequency-domain data. Once the relevant data are collected and the data-based surrogate matrices are formed, the reduced matrices are computed by keeping the dominant components of the singular value decomposition of a relevant quantity as in BT. We emphasize that quadrature approximation of the Gramians is implicit and forms the theory behind the framework; these quadrature-based approximations are never explicitly computed in practice.

\section{Model reduction approaches for LQO systems}
MOR for LQO systems can be broadly classified into two categories. The first one typically commences with lifting transformations used to embed the LQO structure into another one for which conventional MOR methods can be directly applied. Specifically, \cite{VanBeeumen12} uses lifting to rewrite the SISO LQO system as a SIMO (single-input multiple-output) linear system (with linear output). After lifting, \cite{VanBeeumen12} employs classical techniques on the lifted linear system. The drawback is that the lifted system suffers from the problem of having a large number of outputs, making the method computationally intensive. In \cite{PulchNarayan19}, the LQO system is lifted to a quadratic-bilinear (QB) system, followed by an application of the BT method introduced in \cite{benner2017balanced} for the lifted QB system. {This method does not increase the output dimension; however, it results in a reduced model that is quadratic-bilinear in dynamics, thus the LQO structure is not retained.} 

Instead of lifting to a different class of systems, we work with the original LQO structure and consider approximating it with an LQO ROM given, in state-space, by
\begin{align} \label{eq:ROM}
\begin{cases}
    \dot\bx_r(t)&=\bA_r\bx_r(t)+\bB_r\bu(t)\\
    \by_r(t)&=\bC_r\bx_r(t)+\bx_r^\top(t)\bM\bx_r(t),
\end{cases}
\end{align} where $\bx_r(t)\in \IR^r$ with $r \ll n$ is the reduced state variable and $\by_r(t)\in\IR$ is the  output of ROM; thus the ROM state-space matrices are
$\bA_r \in \IR^{r\times r}$,
$\bB_r \in \IR^{r}$,
$\bC_r \in \IR^{1\times r}$, and 
$\bM_r \in \IR^{r\times r}$.
One can consider applying BT for general nonlinear dynamical systems 
(see, e.g., \cite{7742345},\cite{SCHERPEN1993143}) to~\eqref{eq:LQO} to construct the ROM; however, for truly large-scale problems, 
solving the resulting Hamilton-Bellman equations is rather computationally expensive. Moreover,  due to the state-dependent nonlinear transformation, the LQO structure is lost. Instead,
\cite{benner2021gramians} develops the algebraic Gramians for LQO systems, which are then used in the BT algorithm specifically tailored to LQO systems. Recently, \cite{morPrzPGetall24} extended this approach to the class of descriptor systems, and \cite{balicki2025polynomial} proposed an alternate perspective on the balanced truncation method for linear systems with polynomial outputs. Aside from the BT approaches, interpolatory projection-based approaches were proposed in \cite{morGosA19}. Also, projection-based $\mathcal H_2$ optimal methods for LQO systems have been recently introduced in \cite{reiter2024h2},\cite{reiter2025data}, and \cite{reiter2023what}. Finally, an optimization-based approach on multi-objective minimization was proposed in \cite{holicki2025energy}. So far, all methods mentioned here are intrusive in the sense that they require access to the state-space formulation~\eqref{eq:LQO}. As far as the authors are aware, the only non-intrusive  MOR of LQO systems is provided in \cite{gosea2022data_AAA}, which utilizes frequency-domain data. In the rest of the paper, we will derive a non-intrusive data-driven formulation of BT for LQO systems.  

\section{Balanced truncation for LQO systems}
 The time-domain kernels, algebraic Gramians, and an algorithm for balanced truncation for LQO systems have been introduced in \cite{benner2021gramians}; we recall some of the theory here.
\begin{defi}
     The time-domain generalized kernel functions of the system $\system_{\textrm{LQO}}$~\eqref{eq:LQO} are defined as
\begin{subequations}  \label{eq:LQOkernels}
        \begin{align}
    \bh_1(\zeta)&=\bC e^{\bA\zeta}\bB,~\zeta_1 \geq 0, \label{eq:lin_kernel}\\
    \bh_2(\zeta_1,\zeta_2)&= \bB^{\top} e^{\bA^{\top}\zeta_1}\bM e^{\bA\zeta_2}\bB,~\zeta_1,\zeta_2\geq 0.\label{eq:quad_kernel}   
    \end{align}
    \end{subequations}
\end{defi}
\noi (The choice of $\zeta$ over $t$ or $\tau$ is to avoid notational conflicts with the quadrature points to be introduced later.) While $\bh_1(\zeta)$ is the standard kernel for LTI systems and corresponds to the linear part of the output, the second bivariate kernel $\bh_2(\zeta_1,\zeta_2)$ corresponds to the quadratic component in the output and can be rewritten in the Kronecker product format as $\bh_2(\zeta_1,\zeta_2) = \bK^T(e^{\bA\zeta_1}\bB\otimes e^{\bA\zeta_2}\bB)$ where $\bK \in \IR^{n^2}$ is the vectorization of matrix $\bM$ (column-wise). 
One can easily compute the (partial) derivatives of  $\bh_1$ and  $\bh_2$, e.g.,
  \begin{subequations}\label{eq:h_der}
    \begin{align}
    \frac{d \bh_1}{d \zeta}(\zeta)&=\bC \bA e^{\bA \zeta} \bB,~\mbox{and}~\label{eq:h1_der}\\
    \frac{\partial \bh_2}{\partial \zeta_2}(\zeta_1,\zeta_2)&=\bB^\top e^{\bA^\top \zeta_1} \bM \bA e^{\bA \zeta_2} \bB.\label{eq:h2_der}
\end{align}
\end{subequations}
Since the state-equation in~\eqref{eq:LQO} is linear, the controllability Gramian $\bP \in \IR^{n \times n}$ for the LQO system is the same as in the standard LTI case, i.e., 
    \begin{align}  \label{eq:Pint}
    \bP&=\int_0^{\infty} e^{\bA t} {\bB \bB}^{\top} e^{\bA^{\top} t} d t.
    \end{align}
\begin{defi}
The infinite observability Gramian $\bQ$ of~\eqref{eq:LQO} is defined as $\bQ =\bQ_1+\bQ_2 \in \IR^{n \times n}$ where
    \begin{subequations} \label{eq:Q1Q2int}
        \begin{align} \label{eq:Q1int}
    \bQ_1&=\int_0^{\infty} e^{\bA^{\top} t} \bC^{\top} \bC e^{\bA t} d t,\\
    \bQ_2 & =\int_0^{\infty} e^{\bA^{\top} t} \bM^{\top} \bP \bM e^{\bA t} d t. \label{eq:Q2int}
    \end{align}
    \end{subequations}
\end{defi}

The Gramians $\bP$ and $\bQ$ satisfy the following (linear) Lyapunov matrix equations \cite{benner2021gramians}:
\begin{subequations}
    \begin{align}
    \bA\bP+\bP\bA^\top+\bB\bB^\top&=0,\label{eq:P_gram_eqn}\\
\bA^\top\bQ+\bQ\bA+\bM^\top\bP\bM+\bC^\top\bC&=0.\label{eq:Q_gram_eqn}
\end{align}
\end{subequations}

As in BT for LTI systems, one does not need the Gramians $\bP$
and $\bQ$, but rather their \emph{square-root factors}. 
These square-root factors  $\bU,\bL_1,\bL_2,\bL \in \IR^{n \times n}$ of, respectively, $\bP,\bQ_1,\bQ_2$, and $\bQ$ satisfy
\begin{align}
\bP = \bU \bU^T,~\bQ_1 = \bL_1 \bL_1^T,~\bQ_2 = \bL_2 \bL_2^T,~\bQ = \bL \bL^T.
\end{align}

The square-root factors can be computed directly without forming $\bP$ and $\bQ$, see, e.g., \cite{ACA05}. 
Once the square-root factors are computed, BT for LQO systems~\cite{benner2021gramians} proceeds in the same fashion as its LTI classical counterpart. First, the singular value decomposition (SVD) of the matrix product $\bL^\top\bU$ is computed, and a truncation value $n>r\geq 1$ is chosen based on the decay of the singular values, followed by partitioning the matrices:
\begin{align}\label{eq:SVD_BT}
    \bL^\top\bU=\bZ \bS \bY^\top = \left[\begin{array}{c}
    {\bZ}_1 ~{\bZ}_2
    \end{array}\right]\left[\begin{array}{ll}
    {\bS}_1 & \\
    & {\bS}_2
    \end{array}\right]\left[\begin{array}{l}
    {\bY}_1^{\top} \\
    {\bY}_2^{\top}
    \end{array}\right].
\end{align} The matrices $\bY_1\in\mathbb{R}^{n\times r},\bZ_1\in\mathbb R^{n\times r}$ and $\bS_1\in\mathbb R^{r\times r}$ correspond to the dominant subsystem. The singular values of matrix $\bL^\top\bU$, i.e., 
the diagonal entries of $\bS$,
are called the LQO system's Hankel singular values, as in the LTI case. The BT ROM is then obtained by truncating the system, by neglecting the smallest $n-r$ singular values. These matrices are then used to construct the model reduction bases  $\bW_r=\bL\bZ_1\bS_1^{-1/2} \in \IR^{n \times r}$ and $\bV_r=\bU\bY_1\bS_1^{-1/2}\in \IR^{n \times r}$. The BT-based ROM~\eqref{eq:ROM} is then obtained via projection as
\begin{equation}\label{eq:LQOBT_terms}
    \begin{aligned}
        \mathbf{A}_r&=\bW_r^T \bA \bV_r=\mathbf{S}_1^{-1 / 2} \mathbf{Z}_1^\top\left(\mathbf{L}^\top \mathbf{A} \mathbf{U}\right) \mathbf{Y}_1 \mathbf{S}_1^{-1 / 2},\\
        \mathbf{B}_r&=\bW_r^T \bB=\mathbf{S}_1^{-1 / 2} \mathbf{Z}_1^\top\left(\mathbf{L}^\top \mathbf{B}\right),\\ \mathbf{C}_r&=\bC \bV_r=(\mathbf{C U}) \mathbf{Y}_1 \mathbf{S}_1^{-1 / 2},\\
        \bM_r&=\bV_r^T \bM \bV_r=\mathbf{S}_1^{-1 / 2}\bY^\top_1(\bU^\top\bM\bU)\mathbf{Y}_1 \mathbf{S}_1^{-1 / 2}.
    \end{aligned}
\end{equation}
By construction, $\bW_r^T  \bV_r = \bI_r$.

\section{Quadrature-based BT method}
\label{sec:QuadBt}

The BT method, as formulated above, is inherently intrusive since it requires direct access to internal system dynamics via $\bA,\bB,\bC,\bM$ as shown in formulae in (\ref{eq:LQOBT_terms}). For LTI systems, the QuadBT approach developed in \cite{gosea2022data} addresses this very issue by computing (approximate) BT ROMs solely from data-based quantities, i.e., from the samples of the kernels and their derivatives. In the standard LTI case, this is achieved by \emph{implicitly} approximating the Gramians with numerical quadratures, in either the time or frequency domains \cite{gosea2022data}. We proceed here in a similar manner for the LQO extensions, by \emph{implicitly} approximating the square root factors $\bU$ and $\bL$ with appropriate counterparts. As we will see later on, these approximate roots are never explicitly formed; rather, they are solely used for the analytical derivations.  We refer the reader to 
\cite{morReiGG24,morLilG24,reiter2025data,morRei25} for extensions of QuadBT to other settings. 

Recall the reduced-order terms in (\ref{eq:LQOBT_terms}) and note that matrices $\bS_1,\bY_1,\bZ_1$ are obtained from the SVD of $\bL^\top\bU$. This leaves us with five terms to compute to construct the BT-based ROM, namely $\bL^\top\bU, \mathbf{L}^\top \mathbf{A} \mathbf{U}, \mathbf{L}^\top \mathbf{B}, ~\mathbf{C U}, \bU^\top\bM\bU$. Thus, to obtain a ROM in a non-intrusive fashion, we need to substitute these terms with appropriate data-based quantities. This is precisely what we achieve in the rest of the section. 

\subsection{Implicit quadrature approximations for Gramians}

The first step  is to replace 
the Gramians $\bP$, $\bQ_1$, and $\bQ_2$ with their quadrature-based approximations. 
Using~\eqref{eq:Pint}, 
the reachability Gramian $\bP$ can be approximated via \begin{align} \label{eq:Ptil}
    \bP \approx \tP = \sum_{i=1}^{N_p}\left(\rho_i e^{\bA t_i} \bB\right)\left(\rho_i e^{\bA t_i} \bB\right)^{\top} = \tU\tU^{\top},
\end{align} where 
$t_i$ and $\rho_i^2$, for $1 \leq i \leq N_p$  are, respectively, the quadrature nodes and weights, and 
the approximate square-root factor $\tU$ is  \begin{align} \label{eq:U_approx}
   \tU&= \left[\rho_1 e^{\bA t_1} \bB \ \ \cdots \ \ \rho_{N_p} e^{\bA t_{N_p}} \bB\right]\in\mathbb{R}^{n\times N_p}.
\end{align} 
We emphasize that the quadrature approximation~\eqref{eq:Ptil} will never be formed explicitly, but it will guide the method development.
The next step is to similarly approximate the two components $\bQ_1,\bQ_2$ in~\eqref{eq:Q1Q2int} of the observability Gramians $\bQ$ with quadrature-based Gramians,

\vspace{-1mm}
 {\small
  \begin{equation}
 \begin{aligned}
   \bQ_1 \approx \tQ_1&=\sum_{j=1}^{N_q}\phi_j^2 e^{\bA^{\top} \tau_j} \bC^{\top} \bC e^{\bA\tau_j} =\tL_1 \tL_1^{\top}\\
  \bQ_2 \approx  \tQ_2&=\sum_{j=1}^{N_q}\phi_j^2 e^{\bA^{\top} \tau_j} \bM^{\top} \tU\tU^\top  \bM e^{\bA \tau_j}=\tL_2 \tL_2^\top,
\end{aligned}
\end{equation} }

\noindent where 
$\phi_j^2$ and $\tau_j$ are quadrature weights and nodes, for $1 \leq i \leq N_p$, and 
the approximate square-root factors $\tL_1\in\mathbb{R}^{n\times N_q}$ and $\tL_2\in\mathbb{R}^{n\times N_pN_q}$ are given as
\begin{subequations}  \label{eq:L1L2approx}
    \begin{align}
    \tL_1&=\left[\phi_1 e^{\bA^{\top} \tau_1} \bC^{\top} \ \ \cdots \ \  \phi_{N_q} e^{\bA^{\top} \tau_{N_q}} \bC^{\top}\right],
    \label{eq:L1approx}\\
    \tL_2&=\left[\phi_1 e^{\bA^{\top} \tau_1} \bM^{\top} \tU \ \ \cdots \ \ \phi_{N_q} e^{\bA^{\top} \tau_{N_q}} \bM^{\top} \tU \right]. \label{eq:L2approx}
\end{align}
\end{subequations}
This allows us to construct a quadrature-based approximation to $\bQ$ and its square-root factor as
\begin{align}
 \bQ \approx   \tQ&=\tQ_1 + \tQ_2= \tL_1 \tL_1^\top + \tL_2 \tL_2^\top = \tL\tL^\top, \\
 \mbox{where}~   \tL&=\left[\tL_1 \quad \tL_2\right]\in\mathbb{R}^{n\times N_pN_q}. \label{eq:L_approx}
\end{align}
Using these approximate quantities, we now define the surrogate quantities to those appearing in (\ref{eq:LQOBT_terms}), namely
\begin{subequations} 
\label{eq:tildesur}
    \begin{align}
\tilL=\tL^\top\tU,\quad \tM&=\tL^\top\bA\tU, \label{eq:HtMt} \\ \tilh=\tL^\top\bB,\quad \tg&=\bC\tU \quad \mbox{and} \quad \K=\tU^\top\bM\tU. \label{eq:htgtKt}
\end{align}
\end{subequations}
The quantities in~\eqref{eq:tildesur} are the five matrices needed to compute BT-based ROM. 
Next, we proceed to prove that these matrices can be explicitly written in terms of data, i.e., as products of quadrature weights and appropriate kernels and their derivatives evaluated at specific quadrature nodes, thus forming the basis of data-driven BT for LQO systems. 
\subsection{Replacing intrusive terms with data matrices}
The first matrix that we consider is $\tilL=\tL^{\top}\tU$, which acts as a surrogate for the matrix $\bL^\top \bU$. For classical LTI systems and for equidistant nodes, this matrix has a Hankel structure~\cite{gosea2022data}.
\begin{theo}\label{thm:LQO_L'U}
   For $\tU$ and $\tL$ as in \eqref{eq:U_approx} and \eqref{eq:L_approx}, the matrix $\tilL=\tL^{\top}\tU \in \IR^{(N_q+N_qN_p)\times N_p}$ can be explicitly expressed as
   \begin{align}  \label{eq:Htilde}       \tilL=\left[\begin{array}{c}
            \tilL_1 \\ \tilL_2 
       \end{array}\right]=\left[\begin{array}{c}
            \tL_1^\top\tU\\ \tL_2^\top\tU
       \end{array}\right]
   \end{align} with the entries of $\tilL_1 \in \IR^{N_q \times N_p}$ and $\tilL_2 \in \IR^{N_q N_p \times N_p} $ as
   \begin{subequations} \label{eq:Htildeji}
       \begin{align}  \label{eq:Htilde1ji}
       (\tilL_1)_{j,i}&=\rho_i\phi_j \bh_1(t_i+\tau_j),\\
       (\tilL_2)_{N_q(k-1)+j,i}&=\rho_i\phi_j\rho_k \bh_2(t_k,\tau_j+t_i)  \label{eq:Htilde2ji}
   \end{align}
      \end{subequations}
for $i,k = 1,\ldots,N_p$ and $j=1,\ldots,N_q$, where $\bh_1$ and $\bh_2$ are the kernel functions in~\eqref{eq:LQOkernels}.
\end{theo}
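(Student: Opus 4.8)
The plan is to establish the identity by a direct, entrywise computation that exploits the explicit column structures of $\tU$ in \eqref{eq:U_approx} and of $\tL$ in \eqref{eq:L_approx}. First I would use the partition $\tL = [\tL_1 \ \tL_2]$ to write $\tL^\top = \begin{bmatrix} \tL_1^\top \\ \tL_2^\top\end{bmatrix}$, so that $\tilL = \tL^\top \tU$ splits blockwise into $\tilL_1 = \tL_1^\top \tU$ and $\tilL_2 = \tL_2^\top \tU$. This immediately produces the stacked form \eqref{eq:Htilde} and fixes the dimensions $N_q \times N_p$ and $N_q N_p \times N_p$, reducing the problem to computing the entries of each block.

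For the linear block $\tilL_1$, I would read off the $j$-th column of $\tL_1$ from \eqref{eq:L1approx} as $\phi_j e^{\bA^\top \tau_j}\bC^\top$, so that the $j$-th row of $\tL_1^\top$ is $\phi_j \bC e^{\bA\tau_j}$, and pair it with the $i$-th column $\rho_i e^{\bA t_i}\bB$ of $\tU$. The $(j,i)$ entry is then $\rho_i \phi_j \bC e^{\bA\tau_j} e^{\bA t_i}\bB$; invoking the semigroup identity $e^{\bA\tau_j}e^{\bA t_i} = e^{\bA(\tau_j+t_i)}$ (the exponents commute) together with the definition \eqref{eq:lin_kernel} of $\bh_1$ yields $(\tilL_1)_{j,i} = \rho_i\phi_j\, \bh_1(t_i+\tau_j)$, which is exactly \eqref{eq:Htilde1ji}.

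The quadratic block $\tilL_2$ is the crux and demands more careful index accounting. Here $\tL_2$ is itself a concatenation of $N_q$ sub-blocks, the $j$-th being $\phi_j e^{\bA^\top \tau_j}\bM^\top \tU \in \IR^{n\times N_p}$, whose $k$-th column is $\phi_j\rho_k e^{\bA^\top \tau_j}\bM^\top e^{\bA t_k}\bB$. Transposing this column and multiplying by the $i$-th column of $\tU$ gives, after using the symmetry $\bM^\top=\bM$ and again the semigroup property, the scalar $\rho_i\phi_j\rho_k\, \bB^\top e^{\bA^\top t_k}\bM e^{\bA(\tau_j+t_i)}\bB$, which by the definition \eqref{eq:quad_kernel} of $\bh_2$ equals $\rho_i\phi_j\rho_k\,\bh_2(t_k,\tau_j+t_i)$. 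The one genuinely delicate point is reconciling the bookkeeping of the composite index: the definition \eqref{eq:L2approx} groups the columns of $\tL_2$ into $N_q$ outer blocks indexed by $j$, each carrying the $N_p$ columns of $\tU$ indexed by $k$, whereas the claimed flattened row index $N_q(k-1)+j$ in \eqref{eq:Htilde2ji} groups instead into $N_p$ outer blocks indexed by $k$ with $j$ running fastest. I would therefore need to fix, once and for all, the enumeration of the pair $(k,j)$ so that the row position $N_q(k-1)+j$ consistently selects the appropriate column of $\tL_2$, and verify compatibility with the total row count $N_qN_p$. This multi-index reconciliation is the main obstacle; once it is pinned down, the entry formula drops out of the computation above, and assembling the two blocks recovers \eqref{eq:Htilde}, completing the proof.
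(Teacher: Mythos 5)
Your proof is correct and follows essentially the same route as the paper's: split $\tilL=\tL^\top\tU$ blockwise, then compute each entry as an inner product of a column of $\tL_1$ or $\tL_2$ with a column of $\tU$, using the semigroup property $e^{\bA\tau_j}e^{\bA t_i}=e^{\bA(\tau_j+t_i)}$, the symmetry of $\bM$, and the kernel definitions \eqref{eq:LQOkernels}. The indexing mismatch you flag (columns of $\tL_2$ in \eqref{eq:L2approx} are naturally ordered as $N_p(j-1)+k$ while \eqref{eq:Htilde2ji} uses $N_q(k-1)+j$) is real but is only a relabeling convention; the paper's own proof silently adopts the $(k,j)\mapsto N_q(k-1)+j$ enumeration, and any fixed permutation of the rows of $\tilL_2$, applied consistently to all data matrices, leaves the method unchanged.
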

\begin{proof}
The block structure in~\eqref{eq:Htilde} follows directly from the block structure of $\tL$ in~\eqref{eq:L_approx}.
Using~\eqref{eq:U_approx} and~\eqref{eq:L1L2approx},
the elements of the first block $\tilL_1=\tL_1^{\top}\tU$ can be computed as \begin{align*}
    (\tilL_1)_{j,i}&= \be_j^\top \tL_1^{\top}\tU \be_i =  
    (\phi_j\bC e^{\bA\tau_j})(\rho_ie^{\bA t_i}\bB),\\
    &=\phi_j\rho_i 
    \bC e^{\bA(t_i+\tau_j)}\bB=\rho_i\phi_j\bh_1(t_i+\tau_j),
\end{align*} 
for $i = 1,\ldots,N_p$ and $j=1,\ldots,N_q$ where $\be_i$ denotes the $i$th canonical vector. 
Similarly for $\tilL_2=\tL_2^\top\tU$ we obtain,
\begin{align*}
    (\tilL_2)_{N_q(k-1)+j,i}&=(\phi_j\rho_k\bB^\top e^{\bA^\top t_k} \bM e^{\bA\tau_j}) (\rho_i e^{\bA t_i} \bB)\\
    &=\rho_i \phi_j \rho_k \bh_2(t_k,\tau_j+t_i),
\end{align*} for $i,k = 1,\ldots,N_p$ and $j=1,\ldots,N_q$.

\end{proof}
Theorem~\ref{thm:LQO_L'U} shows that $\tilL$ can be constructed from input-output data using samples of  $\bh_1$ and $\bh_2$. We illustrate the structure of 
$\tilL$ on a simple example.
\begin{ex}
    For $\tU$ and $\tL$ as in~\eqref{eq:U_approx} and~\eqref{eq:L_approx}, choose $N_p=N_q=2$ with unity weights. Then we have
    \begin{align*}
\tilL=\left[\begin{array}{cc}
           \bh_1(t_1+\tau_1) &  \bh_1(t_2+\tau_1) \\
           \bh_1(t_1+\tau_2) & \bh_1(t_2+\tau_2) \\
           \bh_2(t_1,t_1+\tau_1) &  \bh_2(t_1,t_2+\tau_1)\\
           \bh_2(t_1,t_1+\tau_2)& \bh_2(t_1,t_2+\tau_2) \\
           \bh_2(t_2,t_1+\tau_1) &  \bh_2(t_2,t_2+\tau_1) \\
           \bh_2(t_2,t_1+\tau_2) & \bh_2(t_2,t_2+\tau_2)\\
        \end{array}\right] \in \IR^{6 \times 2}.
    \end{align*}
\end{ex}

\begin{theo}\label{thm:LQO_LAU}
    For $\tU$ and $\tL$ as in \eqref{eq:U_approx} and~\eqref{eq:L_approx}, the matrix $\tilde{\mathbb{M}}=\tL^{\top} \bA \tU \in \IR^{(N_q+N_qN_p)\times N_p}$ can be expressed as,
    \begin{equation} \label{eq:LQO_L'AU}
    \tilde{\mathbb{M}}=\left[\begin{array}{cc}
         \tilde{\mathbb{M}}_1 \\ \tilde{\mathbb{M}}_2
    \end{array}\right]=\left[\begin{array}{cc}
         \tL_1^\top \bA \tU \\ \tL_2^\top \bA \tU 
    \end{array}\right]
    \end{equation} with the entries of $\tilde{\mathbb{M}}_1$ and $\tilde{\mathbb{M}}_2$ given by, \begin{subequations}\label{eq:Mtildeji}
        \begin{align}
        (\tilde{\mathbb{M}}_1)_{j,i}&=\rho_i\phi_j\frac{d\bh_1}{d \zeta}(t_i+\tau_j)\label{eq:Mtilde1ij}\\
        (\tilde{\mathbb{M}}_2)_{N_q(k-1)+j,i}&=\rho_i\phi_j\rho_k\frac{\partial \bh_2}{\partial \zeta_2}(t_k,\tau_j+t_i)\label{eq:Mtilde2ij}
    \end{align}
    \end{subequations}
   for $i,k = 1,\ldots,N_p$ and $j=1,\ldots,N_q$ where the terms $\frac{d \bh_1}{d\zeta},~\frac{\partial \bh_2}{\partial\zeta_2}$ denote the partial derivatives of $\bh_1$ and $\bh_2$ as shown in \eqref{eq:h_der}.
\end{theo}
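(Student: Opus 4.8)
The plan is to reproduce the argument of Theorem~\ref{thm:LQO_L'U} almost verbatim, the sole new ingredient being the matrix $\bA$ sandwiched between $\tL^\top$ and $\tU$. First I would observe that the block decomposition in~\eqref{eq:LQO_L'AU} follows immediately from the partition $\tL=[\tL_1\ \tL_2]$, so that the task splits into computing the entries of $\tilde{\mathbb{M}}_1=\tL_1^\top\bA\tU$ and of $\tilde{\mathbb{M}}_2=\tL_2^\top\bA\tU$ separately. For the linear block, isolating a single entry with canonical vectors and substituting the columns of $\tL_1$ and $\tU$ from~\eqref{eq:L1L2approx} and~\eqref{eq:U_approx} gives $(\tilde{\mathbb{M}}_1)_{j,i}=\phi_j\rho_i\,\bC e^{\bA\tau_j}\bA e^{\bA t_i}\bB$.

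The one genuinely new observation is that $\bA$ commutes with every matrix exponential $e^{\bA\zeta}$, since both are power series in $\bA$; consequently $e^{\bA\tau_j}\bA e^{\bA t_i}=\bA e^{\bA(\tau_j+t_i)}$. Matching the result against~\eqref{eq:h1_der} then identifies $(\tilde{\mathbb{M}}_1)_{j,i}=\rho_i\phi_j\frac{d\bh_1}{d\zeta}(t_i+\tau_j)$, which is~\eqref{eq:Mtilde1ij}. In other words, passing $\bA$ through the exponential is exactly what turns the kernel $\bh_1$ into its derivative.

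For the quadratic block I would index the rows of $\tilde{\mathbb{M}}_2$ by the pair $(k,j)$ through $N_q(k-1)+j$, exactly as in~\eqref{eq:Htilde2ji}, so the corresponding column of $\tL_2$ reads $\phi_j\rho_k e^{\bA^\top\tau_j}\bM^\top e^{\bA t_k}\bB$. Transposing and pairing with the $i$-th column of $\tU$ yields $(\tilde{\mathbb{M}}_2)_{N_q(k-1)+j,i}=\rho_i\phi_j\rho_k\,\bB^\top e^{\bA^\top t_k}\bM e^{\bA\tau_j}\bA e^{\bA t_i}\bB$. The same commutation, now applied as $\bM e^{\bA\tau_j}\bA e^{\bA t_i}=\bM\bA e^{\bA(\tau_j+t_i)}$, collapses this to $\rho_i\phi_j\rho_k\frac{\partial\bh_2}{\partial\zeta_2}(t_k,\tau_j+t_i)$ by~\eqref{eq:h2_der}, establishing~\eqref{eq:Mtilde2ij}.

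I do not expect a real obstacle here: compared with Theorem~\ref{thm:LQO_L'U}, where the two exponentials simply merge, the only difference is the insertion of $\bA$, and its commutativity with $e^{\bA\zeta}$ makes the reduction automatic. The only points requiring care are purely clerical, namely keeping the transposes straight when forming $\tL_2^\top$ and respecting the $N_q(k-1)+j$ row-indexing convention inherited from Theorem~\ref{thm:LQO_L'U}.
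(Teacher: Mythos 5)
Your proposal is correct and follows essentially the same route as the paper's proof: isolate each entry with canonical vectors, substitute the columns of $\tU$, $\tL_1$, $\tL_2$, and merge the exponentials around the inserted $\bA$ to recognize the derivatives in \eqref{eq:h_der}. The only difference is that you make the commutation $e^{\bA\tau_j}\bA e^{\bA t_i}=\bA e^{\bA(\tau_j+t_i)}$ explicit, which the paper uses silently.
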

\begin{proof} The block structure in~\eqref{eq:LQO_L'AU} directly follows from the block structure of $\tL$ in~\eqref{eq:L_approx}. The matrix $\tM=\tL^{\top}\bA\tU$ has two terms $\tM_1$ and $\tM_2$. Using~\eqref{eq:U_approx} and~\eqref{eq:L1L2approx},
the elements of the first block $\tM_1=\tL_1^{\top}\bA\tU$ can be computed as,\begin{align*}
    (\tM_1)_{j,i}&=\be_j^\top\tL_1^{\top}\bA\tU\be_i=(\phi_j\bC e^{\bA\tau_j})\bA(\rho_ie^{\bA t_i}\bB),\\
    &=\phi_j\rho_i 
    \bC \bA e^{\bA(t_i+\tau_j)}\bB=\rho_i\phi_j\frac{d \bh_1}{d \zeta}(t_i+\tau_j),
\end{align*} for $i = 1,\ldots,N_p$ and $j=1,\ldots,N_q$ where $\be_i$ denotes the $i$th canonical vector. Similarly for $\tM_2=\tL_2^\top\bA\tU$ we get, 
\begin{align*}
    (\tM_2)_{N_q(k-1)+j,i}&=(\phi_j\rho_k\bB^\top e^{\bA^\top t_k} \bM \bA e^{\bA\tau_j}) (\rho_i e^{\bA t_i} \bB)\\
    &=\rho_i \phi_j \rho_k \frac{\partial \bh_2}{\partial \zeta_2}(t_k,\tau_j+t_i).
\end{align*} where $1\leq i,k \leq N_p$, $1\leq j \leq N_q$.
\end{proof}

\begin{theo}\label{thm:LQO_others}
    Let $\tU$ and $\tL$ be as in \eqref{eq:U_approx} and \eqref{eq:L_approx}, then the matrices $\tilh=\tL^\top\bB=, ~\tg=\bC\tU$ and $\K=\tU^\top\bM\tU$ can be expressed in terms of the kernels (refer \eqref{eq:lin_kernel} and \eqref{eq:quad_kernel}),
        \begin{align} \label{eq:others}
        \tilh=\left[\begin{array}{c}
             \tilh_1 \\ \tilh_2
\end{array}\right]\in\mathbb{R}^{(N_p+N_qN_p)},~\tg\in\mathbb{R}^{1\times N_p},~ \K\in\mathbb{R}^{N_p\times N_p},
    \end{align}
     with the entries of these matrices given by
     \begin{align*}
        (\tilh_1)_{j}&=\phi_j \bh_1(\tau_j) & (\tilh_2)_{(k-1)N_q+j}&=\phi_j\rho_k\bh_2(t_k,\tau_j)\\
        (\tg)_i&=\rho_i\bh_1(t_i)&(\K)_{i,k}&=\rho_i\rho_k\bh_2(t_i,t_k)
    \end{align*} where $1\leq i,k\leq N_p, 1\leq j\leq N_q$.

    \begin{proof}
        The proof follows similarly to that of Theorem \ref{thm:LQO_L'U}. For $\tilh$, we consider the blocks $\tilh_1$ and $\tilh_2$ whose elements can be expressed in terms of samples of the time domain kernels. The result follows similarly for $\tg$ and $\K$.
    \end{proof}
\end{theo}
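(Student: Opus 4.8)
The plan is to exploit the block partition $\tL=[\tL_1\ \tL_2]$ together with the explicit column forms of $\tU$ and $\tL$ in~\eqref{eq:U_approx} and~\eqref{eq:L1L2approx}, and then to read off each matrix entry by sandwiching the appropriate canonical vectors, exactly as in the proofs of Theorems~\ref{thm:LQO_L'U} and~\ref{thm:LQO_LAU}. The only algebraic identity required is the semigroup property $e^{\bA s}e^{\bA t}=e^{\bA(s+t)}$, which collapses each product of matrix exponentials into a single exponential whose argument is a sum of quadrature nodes; once this is done, every scalar entry is immediately recognized as a weighted sample of $\bh_1$ or $\bh_2$ from~\eqref{eq:LQOkernels}.

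Concretely, for $\tilh=\tL^\top\bB$ I would first split it as a stack of $\tilh_1=\tL_1^\top\bB$ and $\tilh_2=\tL_2^\top\bB$, mirroring the block split already used for $\tilL$. Projecting the first block with $\be_j$ gives $(\tilh_1)_j=\be_j^\top\tL_1^\top\bB=\phi_j\bC e^{\bA\tau_j}\bB=\phi_j\bh_1(\tau_j)$. For the second block, the row of $\tL_2^\top$ indexed by $(k-1)N_q+j$ equals $\phi_j\rho_k\bB^\top e^{\bA^\top t_k}\bM e^{\bA\tau_j}$, so right-multiplication by $\bB$ yields $\phi_j\rho_k\bh_2(t_k,\tau_j)$. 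The computations for $\tg=\bC\tU$ and $\K=\tU^\top\bM\tU$ are shorter still: the $i$-th column of $\tU$ is $\rho_i e^{\bA t_i}\bB$, so $(\tg)_i=\bC(\rho_i e^{\bA t_i}\bB)=\rho_i\bh_1(t_i)$, while sandwiching $\bM$ between the $i$-th and $k$-th columns of $\tU$ gives $(\K)_{i,k}=\rho_i\rho_k\bB^\top e^{\bA^\top t_i}\bM e^{\bA t_k}\bB=\rho_i\rho_k\bh_2(t_i,t_k)$, which is symmetric in $(i,k)$ as expected since $\bM=\bM^\top$.

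I do not anticipate any genuine analytic difficulty; the content is a sequence of direct substitutions. The one point that demands care is the multi-index bookkeeping: I must check that the single linear index $(k-1)N_q+j$ assigned to the rows of $\tL_2^\top$ is used consistently with the convention fixed in Theorem~\ref{thm:LQO_L'U}, so that $\tilh_2$, $\tilL_2$, and $\tM_2$ share the same row ordering and the resulting data matrices can be stacked without reshuffling. Note also that the samples appearing in $\tilh_2$ (mixed nodes $t_k,\tau_j$) differ from those in $\K$ (only the $t$-nodes), so these blocks are genuinely distinct and cannot be merged. Keeping this index ordering straight, rather than any estimate or inequality, is the main and essentially only obstacle.
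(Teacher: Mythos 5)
Your proposal is correct and follows essentially the same route as the paper, which itself only sketches the argument by deferring to the proof of Theorem~\ref{thm:LQO_L'U}; you supply exactly the entrywise computations that sketch implies, and your remark on the row-index convention for $\tilh_2$ is the right detail to watch. The only cosmetic quibble is that, unlike in Theorems~\ref{thm:LQO_L'U} and~\ref{thm:LQO_LAU}, the semigroup identity $e^{\bA s}e^{\bA t}=e^{\bA(s+t)}$ is never actually needed here, since no two exponentials in the same variable are adjacent in any of $\tilh$, $\tg$, or $\K$.
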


\begin{algorithm}[H]
\caption{QBT for LQO systems}\label{alg:QuadBT_LQO_full}
\textbf{Input:} Samples of $\bh_1(\tau_1)$ and $\bh_2(\tau_1,\tau_2)$ and their derivatives at the quadrature nodes $t_1,\dots,t_{N_p},\tau_1,\dots,\tau_{N_q}$, plus quadrature weights $\rho_1,\dots,\rho_{N_p},\phi_1,\dots,\phi_{N_q}$. \\  
\textbf{Output:} The reduced-order LQO system matrices $\bA_r \in \IR^{r \times r}, \bB_r  \in \IR^{r}, \bC_r \in \IR^{1 \times r}$, and $\bM_r  \in \IR^{r \times r}$.
\begin{enumerate}
    \item Construct data matrices $\tilL, \M, \tilh, \tg, \K$ using appropriate quadrature nodes $\rho_i,\phi_j$ as shown in \eqref{eq:Htildeji},~\eqref{eq:Mtildeji} and \eqref{eq:others}.
    \item Compute SVD of $\tilL$, choose an appropriate order $r$ depending on the decay of the singular values of $\tilL$ (while ensuring that $r\leq N_p$) and partition the SVD:
    $$
\tilL=\left[\begin{array}{cc}
    \tilde{\bZ}_1 & \tilde{\bZ}_2
    \end{array}\right]\left[\begin{array}{ll}
    \tilde{\bS}_1 & \\
    & \tilde{\bS}_2
    \end{array}\right]\left[\begin{array}{l}
    \tilde{\bY}_1^{\top} \\
    \tilde{\bY}_2^{\top}
    \end{array}\right],
    $$ 
    where $\tilde\bZ_1\in\mathbb{R}^{(N_qN_p+N_q)\times r}, \tilde\bS_1\in\mathbb{R}^{r\times r}$,$\tilde\bY_1^\top \in\mathbb{R}^{r\times N_p}.$
    \item Construct the data-based BT ROM:
    \begin{align*}
        \bA_r&=\tilde{\bS}_1^{-1 / 2} \tilde{\bZ}_1^{\top} \tM \tilde{\bY}_1 \tilde{\bS}_1^{-1 / 2}, & \bB_r&=\tilde{\bS}_1^{-1 / 2} \tilde{\bZ}_1^{\top} \tilh,\\
        \bC_r&=\tg^{\top} \tilde{\bY}_1^{\top} \tilde{\bS}_1^{-1 / 2}, & \bM_r&=\tilde{\bS}_1^{1 / 2} \tilde{\bY}_1^{\top} \K \tilde{\bY}_1 \tilde{\bS}_1^{-1 / 2}.
    \end{align*}
\end{enumerate}
\end{algorithm}

Theorems~\ref{thm:LQO_L'U},~\ref{thm:LQO_LAU} and \ref{thm:LQO_others}
show how to replace the intrusive terms appearing in the standard BT LQO algorithm by equivalent data-based quantities. Thus, we are ready to propose the data-driven BT for LQO systems. Due to the implicit quadrature-based formulation behind it, we call it Quad BT for LQO systems and denote it by \qbt. The resulting numerical method is sketched in Algorithm~\ref{alg:QuadBT_LQO_full}.

\qbt~as outlined in Algorithm~\ref{alg:QuadBT_LQO_full} only requires access to input-output data in the form of samples of the kernel functions $\bh_1(\zeta)$ and $\bh_2(\zeta_1,\zeta_2)$ and their (partial) derivatives at the selected quadrature nodes; thus extending the time-domain QuadBT~\cite{gosea2022data} from the LTI case to the LQO case. The time-domain QuadBT~\cite{gosea2022data} for the LTI case required sampling $\bh_1(\zeta)$ and its derivatives. Here in the LQO case, the quadratic output leads to the bivariate kernel $\bh_2(\zeta_1,\zeta_2)$, and it naturally appears in the data-driven BT formulation. As Algorithm~\ref{alg:QuadBT_LQO_full} shows, quadrature-based Gramians and their square-root factors are never explicitly formed and are not needed. They are used implicitly to derive analytically what input/output data need to be collected. Unlike the fully linear case, 
sampling the bivariate kernel $\bh_2(\zeta_1,\zeta_2)$ in an experimental setting is not a fully resolved topic and definitely requires further investigation. There have been some attempts to sample/estimate the kernels for nonlinear systems, refer to \cite{rebillat2011identification,rebillat2018comparison} for more details. In future work, we will investigate this theory for LQO systems to devise experimental ways to measure the required data. 

\section{Further considerations for \qbt}

We now focus on various aspects of \qbt such as frequency domain formulation, MIMO implementation, and quadrature selection.
\vspace{-1mm}

\subsection{Frequency domain formulation}
The theorems and algorithm presented so far can be extended to the frequency domain as well. For example, $\bQ_2$ in~\eqref{eq:Q2int} can be defined in the frequency domain  as 
{\small \begin{align} \label{eq:Qtil}
    \bQ_2 &= \frac{1}{2\pi}\int_{-\infty}^{\infty} (i\omega \bI-\bA^\top)^{-1} {\bM\bP\bM} (i\omega \bI-\bA)^{-1} d \omega,
\end{align}} 
and then can be approximated via numerical quadrature as 
{\small \begin{align} \label{eq:Qtil} \tQ_2&=\sum_{j=1}^{N_q}\phi_j^2(i\mathbf s_j \bI-\bA^\top)^{-1} \bM\tU \tU^\top \bM(i\mathbf{s}_j \bI-\bA)^{-1},
\end{align}} 
\noindent where $\mathbf s_j$ and $\phi_j^2$, for $1 \leq j \leq N_q$  are, respectively, the quadrature nodes and weights and $\tU$ is the frequency domain approximate square root factor of $\tP$. One can similarly construct square root factors $\tL_2$ for the $\bQ_2$. Repeating this for $\bP$ and $\bQ_1$, we get an expression for frequency-based factors $\tL$ and $\tU$. This gives us a variant of Algorithm.~\ref{alg:QuadBT_LQO_full} with the elements of the matrices $\tilL, \M, \tilh, \tg, \K$ composed of samples of the transfer functions (Laplace transform of $\bh_1$ and $\bh_2$) and derivatives of the transfer function. For example, the entries of the matrix $\tilL$ in~\eqref{eq:Htilde} are now given by \begin{align*}
    (\tilL_1)_{k,j}&= -\phi_k \rho_j \frac{\bH_1(i s_k)-\bH_1(i\theta_j)}{is_k-i\theta_j},\\
    (\tilL_2)_{N_q(k-1)+j,l}&= -\phi_j  \rho_k \rho_l \frac{\bH_2(i\theta_l,is_j)-\bH_2(i\theta_l,i\theta_k)}{is_j -i \theta_k},
\end{align*} where $\theta_{l,k}$ and $\rho_{l,k}$ for $1\leq l,k\leq N_p$ are the quadrature nodes and weights used in the approximation of $\bP$, and $\bH_1$ and $\bH_2$ are the Laplace transforms of $\bh_1$ and $\bh_2$ given by \begin{align*}
    \bH_1(s)&:= \bC (s\bI-\bA)^{-1}\bB,\\ 
   \bH_2(s_1,s_2)&:= \bB^\top (s_1\bI-\bA^\top)^{-1}\bM(s_2\bI-\bA)^{-1}\bB.
\end{align*}
We note that $\tilL$ is now a diagonally-scaled Loewner matrix, similar to those appearing in the frequency formulation of data-driven balancing for LTI systems \cite{gosea2022data}. The entries of the remaining matrices follow similarly and are omitted due to page constraints.

\subsection{Extension to the MIMO case}
\label{sec:MIMO}

Denote with $m$ the number of inputs and with $p$ the number of outputs of the MIMO LQO system. Then, in~\eqref{eq:LQO},
$\bB \in \IR^{n \times m}$
and $\bu(t) \in \IR^m$; and the output equation reads
\begin{equation}\label{eq:MIMO_LQO}
    \begin{aligned}
    \left[\begin{array}{c}
         \by_1(t)\\
          \vdots\\
          \by_p(t)
    \end{array}\right]&=\bC\bx(t)+\left[\begin{array}{c}
         \bx^\top(t)\bM_1\bx(t)\\
          \vdots\\
          \bx^\top(t)\bM_p\bx(t)
    \end{array}\right],
    \end{aligned}
\end{equation}
where $\by(t) \in\mathbb{R}^p, \ \bM_i\in\mathbb{R}^{n\times n}$ for $1\leq i\leq p$, $\bC \in \IR^{p \times n}, \bB \in \IR^{n \times m}$.
Theorems \ref{thm:LQO_L'U}, \ref{thm:LQO_LAU}, and \ref{thm:LQO_others} hold in that case as well, where the time-domain kernels of the system are now matrices of dimension $p\times m$. The data matrices $\tilL,\M,\tg,\tilh$ and $\K$ will have blocks as their entries. For each matrix $\bM_i$, an appropriate data-based matrix $\K_i$ must be constructed using Theorem~\ref{thm:LQO_others}. Then, one can proceed with those (larger) matrices without any changes applied to Algorithm~\ref{alg:QuadBT_LQO_full}. One can use tangential directions (sketching) \cite{morAntBG20} to lower the complexity. Additionally, one can use randomized-SVD to compute the SVD of $\tilL$. 

\subsection{Quadrature points selection}
The choice of quadrature points influences the success of the method. The quadrature points are typically chosen so as to span a rich time or frequency interval (with respect to the first kernel $\bh_1$). Additionally, as expected, the more nodes that are taken, the better the approximation is. In our numerical examples, we restrict the experiments to the choice of logarithmically spaced nodes with the trapezoid rule for simplicity. Even though this simple choice performed very effectively in our numerical experiments, more sophisticated quadrature schemes, such as the Clenshaw–Curtis quadrature rule, could be used~\cite{gosea2022data}. Using these more sophisticated quadrature rules might be a way to reduce the number of quadrature points required to obtain a high-fidelity data-driven BT ROM. 
\section{Numerical experiments}
\label{sec:numerics}
We now provide a proof of concept for our data-driven \qbt framework by comparing it with the intrusive projection-based BT method for LQO systems \cite{benner2021gramians}. All source and data files implemented in MATLAB$^@$, 23.2.0.2428915 (R2023b) Update 4.
and are available at \cite{Padhi_QuadBT_for_LQO_2025}.

 We use the International Space Station (ISS) benchmark (referred to as [iss1r]), modeling the 1R component of the ISS~\cite{GugA01}. The iss1r model is an LTI system of dimension $n=270$ with $m=3$  inputs, and $p=3$ outputs. We modify the system by adding a tridiagonal matrix $\bM=\operatorname{tridiag}(1,2,1)$ in the observed output, and by selecting only the first input and first output to obtain a SISO LQO system. We construct reduced-order models of order $r=30$ using both projection-based intrusive BT and data-driven \qbt as outlined in Algorithm~\ref{alg:QuadBT_LQO_full}. 
For \qbt, the necessary
data are obtained numerically
by explicitly sampling the relevant kernel.
The  \qbt ROMs are then computed according to Algorithm~\ref{alg:QuadBT_LQO_full}, which only sees the input-output data.
For \qbt, we implicitly employ a standard trapezoid quadrature scheme with $N_p=N_q=800$, logarithmically equi-spaced nodes in the time interval $[10^{-1},10^2]$ s. In Fig.~\ref{fig:HSV_iss}, we plot the leading 30 \emph{true} normalized Hankel singular values of the full model and the data-driven ones obtained via \qbt ROM, illustrating that using only the input-output data, the \qbt framework provides a high-fidelity approximation to the true Hankel singular values. To test the accuracy of the projection-based BT ROM and the data-driven \qbt ROM, we simulate the FOM and both ROMS with the input  $\bu(t)=5(\cos(5\pi t)+\sin(12\pi t)e^{-0.4t})$. 

\begin{figure}[h!]
    \centering
        \begin{tikzpicture}
        \begin{axis}[
            xlabel={Index of the singular value},
            ylabel={$\sigma_i/\sigma_1$},
            title={Normalized Hankel singular values},
            ymode=log,
            xmin=1,
            xmax=28,
            ymin=0.000001,
            ymax=1,
            grid=none,
            legend style={at={(0.95,0.95)}, anchor=north east, font=\small },
            width=8cm,
            height=4cm,
            axis x line=bottom,
            axis y line=left,
            enlarge x limits=0.1,
            enlarge y limits=0.1,
            axis lines=box,
        ]
        
        \addplot[
            color=blue,
            mark=o,
            mark options={fill=blue},
            line width=1pt,
        ] table [ y=HSV_f,x=Index, col sep=comma] {Data/HSV_f.csv};
        \addlegendentry{FOM}
        
        \addplot[
            color=red,
            mark=asterisk,
            line width=1pt,
        ] table [y=HSV_f, x=Index, col sep=comma] {Data/HSV_r.csv};
        \addlegendentry{QBT ROM}
        
        \end{axis}
    \end{tikzpicture}
    \caption{Hankel singular values (normalized) of the SISO [iss1r] system and the reduced systems obtained by BT and \qbt}
    \label{fig:HSV_iss}
\end{figure}
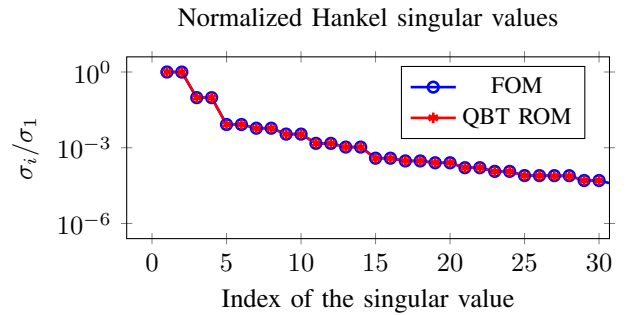

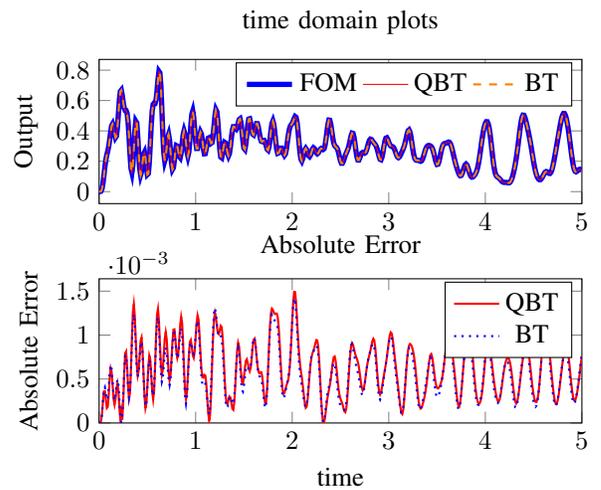
\begin{figure}[h!]
    \centering
    \begin{tikzpicture}

    \begin{groupplot}[
        group style={
            group size=1 by 2, 
            vertical sep=1cm, 
        },
        width=8cm, 
        height=3.5cm,
        ]
        
        \nextgroupplot[
            title={time domain plots}, xmin=0, xmax=5,
            ylabel={Output},
            legend columns=-1
        ]
        
        \addplot[blue, thick, line width=2pt, solid] table [y=FOM_Output, x=Time, col sep=comma] {Data/error_plot_iss.csv};
        \addlegendentry{FOM}

        \addplot[red, thin, mark=., mark size=0.5pt] table [y=QBT_Output, x=Time, col sep=comma] {Data/error_plot_iss.csv};
        \addlegendentry{QBT}
        
        \addplot[orange, thick, dashed] table [y=BT_Output, x=Time, col sep=comma] {Data/error_plot_iss.csv};
        \addlegendentry{BT}

        \nextgroupplot[
            title={Absolute Error},
            ylabel={Absolute Error},
            ymin=0, 
            xmin=0, xmax=5,
            xlabel={time},
        ]
        
        \addplot[red, thick, solid] table [y=Abs_Error_QBT, x=Time, col sep=comma] {Data/error_plot_iss.csv};
        \addlegendentry{QBT}
        
        \addplot[blue, thick, dotted] table [y=Abs_Error_BT, x=Time, col sep=comma] {Data/error_plot_iss.csv};
        \addlegendentry{BT}

    \end{groupplot}
\end{tikzpicture}
    \caption{Time evolution of the SISO [iss1r] system and reduced models obtained by \qbt and BT for chosen input $u(t)$}
    \label{fig:LQO_iss1r_plot}
\end{figure}
Fig.~\ref{fig:LQO_iss1r_plot} shows the observed outputs (top plot) together with the absolute approximation errors (bottom plot). We observe that the data-driven \qbt ROM almost exactly replicates the high-fidelity of the projection-based BT ROM. Next,  we vary the number of quadrature nodes and study its effect on \qbt performance by comparing it with BT using the $\mathcal{H}_2$ measure for LQO systems. Fig.~\ref{fig:h2_trunc} 
shows the $\mathcal{H}_2$ error between the ROMs and full-order model as a function of the number of quadrature nodes with $N_p=N_q$. The blue line serves as a reference denoting the $\mathcal{H}_2$ error of BT for $r=30$. The figure shows that
for this example \qbt reaches intrusive BT accuracy after $N_p=N_q=800$. In Fig.~\ref{fig:h2_r}, we display the $\mathcal{H}_2$ errors for  BT and \qbt as a function of the reduced order $r$. For every $r$ we use logarithmically spaced nodes in $[10^{-1},10^2]$ for \qbt with $N_p=N_q=800$. Naturally, the $\mathcal{H}_2$ error decreases as $r$ increases both for BT and \qbt. More importantly, 
\qbt very closely replicates the performance of projection-based BT. Indeed, for $r=26$, it even outperforms BT; but this is not our goal. Our goal is to replicate the accuracy of intrusive BT from data.

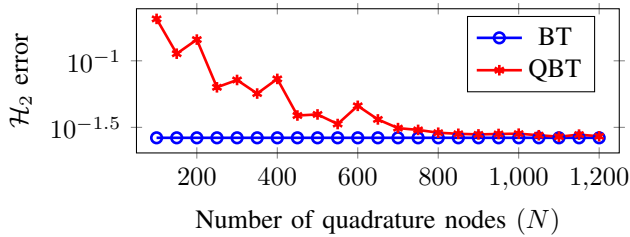
\begin{figure}[h!]
    \centering
        \begin{tikzpicture}
        \begin{axis}[
            xlabel={Number of quadrature nodes $(N)$},
            ylabel={$\mathcal H_2$ error},
            ymode=log,
            xmin=150,
            xmax=1150,
            ymin=0.025,
            ymax=0.2,
            grid=none,
            legend style={at={(0.95,0.95)}, anchor=north east},
            axis lines=box,
            width=8cm,
            height=3.5cm,
            axis x line=bottom,
            axis y line=left,
            enlarge x limits=0.1,
            enlarge y limits=0.1,
            axis lines=box,
        ]
        
        \addplot[
            color=blue,
            mark=o,
            mark options={fill=blue},
            line width=1pt,
        ] table [x=N, y=BT_Error, col sep=comma] {Data/H2_vs_N.csv};
        \addlegendentry{BT}
        
        \addplot[
            color=red,
            mark=asterisk,
            line width=1pt,
        ] table [x=N, y=QBT_Error, col sep=comma] {Data/H2_vs_N.csv};
        \addlegendentry{QBT}
        
        \end{axis}
    \end{tikzpicture}
    \caption{Variation of $\mathcal H_2$ error with number of quadrature nodes ($N$)}
    \label{fig:h2_trunc}
\end{figure}

\begin{figure}[h!]
    \centering
    \begin{tikzpicture}
        \begin{axis}[
            xlabel={Reduced Order $(r)$},
            ylabel={$\mathcal H_2$ error},
            ymode=log,
            xmin=10,
            xmax=60,
            ymin=0.01,
            ymax=0.5,
            grid=none,
            legend style={at={(0.95,0.95)}, anchor=north east},
            xtick={10,20,30,40,50,60},
            width=8cm,
            height=3.5cm,
            axis x line=bottom,
            axis y line=left,
            enlarge x limits=0.1,
            enlarge y limits=0.1,
            axis lines=box,
        ]
        
        \addplot[
            color=blue,
            mark=o,
            mark options={fill=blue},
            line width=1pt,
        ] table [x=Truncation_Index, y=H2_BT_Error, col sep=comma] {Data/H2_truncation.csv};
        \addlegendentry{BT}
        
        \addplot[
            color=red,
            mark=asterisk,
            line width=1pt,
        ] table [x=Truncation_Index, y=H2_r_Error, col sep=comma] {Data/H2_truncation.csv};
        \addlegendentry{QBT}
        
        \end{axis}
    \end{tikzpicture}
    \caption{Variation of $\mathcal H_2$ error with reduced order ($r$)}
    \label{fig:h2_r}
\end{figure}
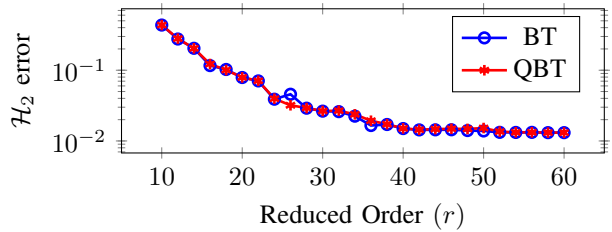

\section{Conclusion and Outlook}
\label{sec:conc}

 We have developed the data-driven QuadBT framework for LQO systems. Using only either time-domain or frequency-domain input/output data, \qbt successfully replicates the performance of projection-based BT. 
Sampling the required data in an experimental setting is not a fully resolved topic and requires further investigation in addition to extensions to other classes of structured nonlinear systems.

\bibliographystyle{abbrv}
\bibliography{ref}

\end{document}